\newtheorem{theorem}{Theorem}[section]
\newtheorem{lemma}[theorem]{Lemma}
\theoremstyle{definition}
\newtheorem{remark}{Remark}
\newcommand{\abs}[1]{\left\vert#1\right\vert}
\newcommand{\RR}{\mathbb{R}}
\newcommand{\norm}[1]{\left\Vert#1\right\Vert}
\begin{document}
\title{Existence of solutions for a $k$-Hessian equation and its connection with self-similar solutions}

\author{Justino S\'{a}nchez}
\date{}
\maketitle
\begin{center}
Departamento de Matem\'{a}ticas, Universidad de La Serena\\
 Avenida Cisternas 1200, La Serena, Chile.
\\email: jsanchez@userena.cl
\end{center}

\begin{abstract}
 Let $\alpha,\beta$ be real parameters and let $a>0$. We study radially symmetric solutions of 
\begin{equation*}
S_k(D^2v)+\alpha v+\beta \xi\cdot\nabla v=0,\, v>0\;\; \mbox{in}\;\; \mathbb{R}^n,\; v(0)=a,
\end{equation*}
where $S_k(D^2v)$ denotes the $k$-Hessian operator of $v$. For $\alpha\leq\frac{\beta(n-2k)}{k}\;\;\mbox{and}\;\;\beta>0$, we prove the existence of a unique solution to this problem, without using the phase plane method. We also prove existence and properties of the solutions of the above equation for other ranges of the parameters $\alpha$ and $\beta$. These results are then applied to construct different types of explicit solutions, in self-similar forms, to a related evolution equation. In particular, for the heat equation, we have found a new family of self-similar solutions of type II which blows up in finite time. These solutions are represented as a power series, called the Kummer function.
\end{abstract}

\section{Introduction} 
We briefly introduce the class of operators studied in this paper. For a twice-differentiable function $u$ defined on a domain $\Omega\subset\RR^n$, the {\it $k$-Hessian operator} $(k=1,...,n)$ is defined by the formula 
\[
S_k(D^2u)=\sigma_k(\Lambda)=\sum_{1\leq i_1<...<i_k\leq n}\lambda_{i_1}...\lambda_{i_k},
\] 
where $\Lambda=\Lambda(D^2u):=(\lambda_1,...,\lambda_n)$, the $\lambda$'s are the eigenvalues of $D^2u$ and $\sigma_k$ is the $k$-th elementary symmetric function. Equivalently, $S_k(D^2u)$ is the sum of the $k$-th principal minors of the Hessian matrix. See, {\it e.g.}, X.-J. Wang \cite{Wang94, Wang09}. These operators form an important class of second-order operators which contains, as the most relevant examples, the Laplace operator $S_1(D^2u)=\Delta u$  and the Monge-Amp\`{e}re operator $S_n(D^2u)=$ det $D^2u$. They are fully nonlinear when $k>1$. In particular, $S_2\left(D^2u\right)=\frac{1}{2}\left((\Delta u)^2-\abs{D^2 u}^2\right)$. The study of $k$-Hessian equations has many applications in geometry, optimization theory and in other related fields. See \cite{Wang09}. There exists a large literature about existence, regularity and qualitative properties of solutions for the $k$-Hessian equations, starting with the seminal work of L. Caffarelli, L. Nirenberg and J. Spruck \cite{CaNS85}. 

We point out that the $k$-Hessian operators are $k$-homogeneous and also invariant under rotations of coordinates. For more details about these operators, we refer to \cite{Wang09}.

As a by-product of our study, we construct self-similar solutions of a $k$-Hessian evolution equation posed on the whole Euclidean space. We establish that the self-similar solutions that we construct present similar properties with those self-similar solutions of the evolution equation $u_t=\Delta u^m$ in the slow diffusion range, $m>1$, called the porous medium equation and also with those solutions of the $p$-Laplacian evolution equation $u_t=\mbox{div}(\abs{\nabla u}^{p-2}\nabla u)$ when $p>2$.

The self-similar solutions are particular solutions that reflect some symmetries of the underlying equations (whenever they exist) and, although they are isolated objects related to simplified models, they play an important role both in the theory and in the applications. We refer the interested readers to the pioneering book of G.I. Barenblatt \cite{Barenblatt79} for a detailed discussion of this subject. We also point out that there exists an extensive literature about evolution equations that generalize the standard heat equation. This literature addresses, among others equations, the $p$-Laplacian equation, the porous medium equation and the space-fractional porous medium equation. See {\it e.g.} \cite{Bidaut-Veron09, FiWi08, FiWi16, GaVa04, Huang14, ISVa08, KaVa88, MaMe09, QuSo07, Vazquez07, Wang93}. 

Concerning exact solutions of some nonlinear diffusion equations, we note that in \cite{King90} new closed-form similarity solutions of $N$-dimensional radially symmetric equations were given, which are generalizations of the classical Barenblatt solutions. In \cite{ISVa08}, the authors studied an explicit equivalence between radially symmetric solutions for two basic models of nonlinear degenerate diffusion equations, namely, the porous medium equation and the $p$-Laplacian equation. The correspondence in \cite{ISVa08} between self-similar radial solutions is obtained by a careful and detailed phase plane analysis. In particular, Iagar et al. derive the existence of new self-similar solutions for the evolution $p$-Laplacian equation. In \cite{Bidaut-Veron09}, a complete classification of radial self-similar solutions of the $p$-Laplace heat equation is given in the case when $p>2$. The method of proof consists in performing a careful phase-plane analysis of the system associated to the second-order equation satisfied by the stationary part of the self-similar solution. In \cite{Huang14} several one-parameter families of explicit self-similar solutions were constructed for the porous medium equations with fractional operators, as for example, $u_t+(-\Delta)^s u^m=0$ and $u_t=\nabla\cdot(u^{m-1}\nabla(-\Delta)^{-s}u)$ for some ranges of $s$ and $m$. To derive the explicit self-similar solutions, certain special functions are used, such as the modified Bessel function and the (Gauss) hypergeometric function.

We prove the existence of radially symmetric solutions of 
\begin{equation}\label{eq:maineq}
S_k(D^2v)+\alpha v+\beta\xi\cdot\nabla v=0,\, v>0\;\; \mbox{in}\;\; \mathbb{R}^n,\; v(0)=a>0,
\end{equation}
where $\alpha,\beta$ are real parameters, $\cdot$ denotes the standard scalar product in $\mathbb{R}^n,\; \nabla v$ is the gradient of $v$ and $S_k(D^2v)$ is the $k$-Hessian operator of $v$. This equation is related to the study of the self-similar solutions of the $k$-Hessian evolution equation
\begin{equation}\label{eq:k-evol}
u_t=S_{k}(D^2 u)\;\; \mbox{in}\;\; (0,T)\times\mathbb{R}^n,
\end{equation}
which can be see as a nonlinear counterpart of the heat equation. Note that this equation satisfies a scaling group invariance: if $u$ is a solution, so is 
\[
(Su)(t,x)=\tilde{u}(t,x)=cu(at,bx),
\]
provided that $ac^{k-1}=b^{2k}$, where  $a,b$ and $c$ are arbitrary positive numbers (these are the scaling parameters). Those special solutions that are themselves invariant under the scaling group, i.e., $\tilde{u}=u$ are called {\it self-similar solutions}. If we impose an extra condition on the solutions, we obtain a corresponding condition on the parameters. Hence, the group of scaling is reduced to a one-parameter family. Note that we recover a well-known scaling for the heat equation ($k=1$ in \eqref{eq:k-evol}), namely $u_\lambda (t,x)=u(\lambda^2 t,\lambda x)$ for any positive number $\lambda$.

The configuration of the parameters $\alpha$ and $\beta$ in \eqref{eq:maineq} permit us to construct various types of self-similar solutions of equation \eqref{eq:k-evol}. Following the literature, these are classified into three types, according to the following: Let $v$ be the solution of \eqref{eq:maineq}, then the function
\begin{equation}\label{eq:Ansatz1}
u(t,x)=t^{-\alpha}v(xt^{-\beta})\;\;\;\; (\mbox{called of Type I})
\end{equation}
is a solution of \eqref{eq:k-evol} in $(0,\infty)\times\mathbb{R}^n$ if
\[
\alpha (k-1)+2k\beta=1.
\]
Now, for any $T>0$ the function
\begin{equation}\label{eq:Ansatz2}
u(t,x)=(T-t)^\alpha v(x(T-t)^\beta)\;\;\;\; (\mbox{called of Type II})
\end{equation}
is a solution of \eqref{eq:k-evol} in $(0,T)\times\mathbb{R}^n$ if
\[
\alpha (k-1)+2k\beta=-1.
\]
Finally, the function 
\begin{equation}\label{eq:Ansatz3}
u(t,x)=e^{-\alpha t}v(xe^{-\beta t})\;\;\;\; (\mbox{called of Type III})
\end{equation}
is an eternal (defined for all times) solution of \eqref{eq:k-evol} in $(-\infty,\infty)\times\mathbb{R}^n$ if
\[
\alpha (k-1)+2k\beta=0.
\]
As we noted above, the self-similar solutions usually describe the asymptotic behavior of the general solutions of a diffusion equation. Thus, they are of great importance in the general theory of similar equations. We hope that the study of the solutions of \eqref{eq:maineq} is a first step for understand the behavior of solutions of \eqref{eq:k-evol}. 

On the other hand, if the parameters $\alpha$ and $\beta$ are related via 
\begin{equation}\label{eq:sse}
\alpha (k-1)+2k\beta=\rho
\end{equation}
where $\rho\in\{1,-1,0\}$ then they are called {\it self-similar exponents}. The corresponding solutions $v$ of \eqref{eq:maineq} are then called {\it self-similar profiles}, or simply {\it profiles}. The relation \eqref{eq:sse} between the two exponents introduced by \eqref{eq:Ansatz1}-\eqref{eq:Ansatz3} arises from the requirement that $v$ satisfies an equation involving only $\xi$, i.e., equation \eqref{eq:maineq}. Note that, independently of the form of the solutions $u(t,x)$ in ansatz \eqref{eq:Ansatz1}-\eqref{eq:Ansatz3}, the profile $v$ satisfies the same {\it profile equation} \eqref{eq:maineq}.

In situations where the problem under consideration satisfies a conservation law (e.g. conservation of mass), the law in question supplies a second condition on $\alpha$ and $\beta$, thus fixing the parameters. In this case we speak of {\it self-similar solutions of the first kind}. When there are no further restrictions on $\alpha$ and $\beta$, one of the exponents $\alpha$ or $\beta$ is free. In such a case, we speak of {\it self-similar solutions of the second kind}. See \cite{Barenblatt79}.

For a non-negative function $u(t,x)$, we define the function $M(t)$ by
\begin{equation}\label{eq:mass}
M(t)=\int_{\mathbb{R}^n}u(t,x)dx.
\end{equation}
In some physical models, $M(t)$ represents (when $u(t,\cdot)$ is integrable on $\mathbb{R}^n$) the total {\it mass} in $\mathbb{R}^n$ at time $t$ of the solution $u(t,x)$ of an evolution PDE. From conservation principles, as we noted above, this quantity remains invariant, that is, it does not depend on the time variable. In Section \ref{Exact aeqnb}, we construct explicit solutions of type I and type II along which $M(t)$ is constant.

We say that a classical solution of \eqref{eq:k-evol} {\it blows up at time} $T<\infty$ if 
\[
\lim_{t\uparrow T}\sup\abs{u(t,\cdot)}=\infty.
\]
We give examples of solutions of type II, which exhibit finite time blow up for any $x$ in a compact domain or on the whole space.
In particular, for the classical heat equation, an infinite family of self-similar solutions of type II that blows up in finite time is given in Section \ref{buHE}.

Now, for $r=\abs{\xi}$, the radially symmetric profile $v$ satisfies the ODE:
\begin{equation}\label{eq:goveq}
c_{n,k}r^{1-n}(r^{n-k}(v')^k)'+\alpha v+\beta rv'=0,\, v>0\;\; \mbox{on}\;\; (0,\infty),
\end{equation}
where prime denotes differentiation with respect to $r>0$ and 
\begin{equation}
\label{eq:inicond}
\begin{cases}
v(0)=a,\\
v'(0)=0.
\end{cases}
\end{equation}

The results obtained here depends strongly on the relation between the parameters $\alpha, \beta, k$ and the dimension $n$, as well as on whether $k$ is an odd or an even integer.

\begin{theorem}\label{mainth}
Let $a>0$ and let $\alpha,\beta\in\mathbb{R}$. Let $k$ be an odd integer, $1\leq k\leq n$. Assume that $\alpha$ and $\beta$ satisfy the conditions
\begin{equation}\label{eq:maincond}
\alpha\leq\frac{\beta(n-2k)}{k}\;\;\mbox{and}\;\;\beta>0. 
\end{equation}
Then there exists a unique solution $v$ of \eqref{eq:goveq} and \eqref{eq:inicond} on $(0,\infty)$. Moreover, for $\alpha\neq 0,\delta=\beta/\alpha$,
 the function
\[
E(r)=r^2v(r)^{2\delta}
\]
is such that $E'(r)>0$ for all $r>0$.
\end{theorem}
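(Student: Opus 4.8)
The plan is to integrate \eqref{eq:goveq} twice to recast the problem as a single fixed-point equation, to solve that equation locally by a contraction argument that uses the oddness of $k$ in an essential way, and finally to use the announced monotonicity of $E$ itself as the a priori estimate that extends the local solution to all of $(0,\infty)$. Multiplying \eqref{eq:goveq} by $r^{n-1}$ makes the first term $c_{n,k}(r^{n-k}(v')^k)'$, and integrating from $0$ to $r$, together with an integration by parts on the $\beta r^{n}v'$ term and the initial data \eqref{eq:inicond}, yields
\[
c_{n,k}\,r^{n-k}\,(v'(r))^{k}=(\beta n-\alpha)\int_0^r s^{n-1}v(s)\,ds-\beta r^{n}v(r)=:g(r).
\]
Since $k$ is odd, $x\mapsto x^{1/k}$ is a homeomorphism of $\mathbb{R}$, so this relation can be inverted for $v'$ with no sign restriction, producing the integral equation
\[
v(r)=a+\int_0^{r}\Big(\frac{g(\tau)}{c_{n,k}\,\tau^{\,n-k}}\Big)^{1/k}d\tau .
\]
This inversion is exactly where the oddness of $k$ is used, and it is what lets us dispense with the phase-plane method.

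I would then obtain local existence and uniqueness by showing that the operator $T$ defined by the right-hand side is a contraction on a small ball around the constant $a$ in $C[0,R]$. The map $v\mapsto g$ is affine, hence Lipschitz, and near the origin $g(\tau)\sim-\tfrac{\alpha a}{n}\tau^{n}$, so for $v$ close to $a$ the argument $g(\tau)/(c_{n,k}\tau^{n-k})$ is comparable to $\tau^{k}$ and of a fixed sign whenever $\alpha\neq0$. The genuine difficulty, which I expect to be the main obstacle, is that $x\mapsto x^{1/k}$ is only H\"older, not Lipschitz, at $0$; this is overcome by the elementary bound $|y_1^{1/k}-y_2^{1/k}|\le \tfrac1k\min(|y_1|,|y_2|)^{1/k-1}|y_1-y_2|$ for $y_1,y_2$ of common sign, whose factor $\min(|y_1|,|y_2|)^{1/k-1}\sim\tau^{1-k}$ precisely cancels the $\tau^{k}$ gained from dividing $g$ by $\tau^{n-k}$. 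The composed integrand is thus Lipschitz in $v$ with constant $O(\tau)$, giving $\|Tv_1-Tv_2\|\le \tfrac{R^{2}}{2}\,\|v_1-v_2\|$ and a contraction for $R$ small. (The borderline value $\alpha=0$, where $g$ vanishes to higher order at $0$, requires the same scheme with a different power of $\tau$.)

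For the monotonicity of $E$ and for globalization I would set $F(r):=\alpha v+\beta r v'$ and note that $E'(r)=\tfrac{2r}{\alpha}v^{2\delta-1}F(r)$, so $E'>0$ is equivalent to $F/\alpha>0$. Differentiating $F$ and using \eqref{eq:goveq} to remove $v''$ gives the linear equation
\[
F'(r)=A\,v'(r)-B(r)\,F(r),\qquad A:=\alpha-\frac{\beta(n-2k)}{k},\qquad B(r):=\frac{\beta r^{k}}{c_{n,k}\,k\,(v'(r))^{k-1}} .
\]
Hypothesis \eqref{eq:maincond} gives $A\le0$, while $B\ge0$ because $k-1$ is even, and $F(0)=\alpha a$ has the sign of $\alpha$. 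If $A<0$, a first-zero argument applies: at a first zero $r_0$ of $F$ the relation $g'=-r^{n-1}F$ together with $g(0)=0$ fixes the sign of $v'$ on $(0,r_0)$ (and forces $v'(r_0)\neq0$, since $v'(r_0)=0$ would give $F(r_0)=\alpha v(r_0)\neq0$), so $F'(r_0)=A\,v'(r_0)$ has exactly the sign that is incompatible with $F$ reaching $0$, a contradiction; if $A=0$ the equation for $F$ is homogeneous, so $F$ keeps the sign of $F(0)$ and never vanishes. Hence $F/\alpha>0$ and $E'>0$ on the whole interval of existence. Finally, since $E(r)=r^{2}v^{2\delta}$ is then strictly increasing with $E(0^{+})=0$, the solution can neither reach the value $0$ nor blow up at a finite radius; the local solution therefore continues to all of $(0,\infty)$, and uniqueness propagates from the local contraction because $v'\neq0$ for $r>0$ keeps the integrand locally Lipschitz.
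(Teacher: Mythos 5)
Your proof is correct and follows essentially the same route as the paper: the integrated identity $c_{n,k}r^{n-k}(v')^k=(n\beta-\alpha)\int_0^r s^{n-1}v\,ds-\beta r^n v$ is the paper's \eqref{eq:byparts}, the contraction argument matches the Appendix, and your function $F=\alpha v+\beta rv'$ is $\alpha$ times the paper's $h=v+\delta rv'$ from Lemma \ref{vbeha}, which satisfies the same first-order linear ODE with the same sign structure under \eqref{eq:maincond}. The only cosmetic differences are that you run a first-zero contradiction where the paper uses an explicit integrating factor, and that you leave implicit the point (made explicit in the paper's derivation of \eqref{eq:vprimeupperbound}) that boundedness of $v$ together with the integral identity also bounds $v'$, which is the remaining alternative in the continuation criterion.
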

We will prove this result adapting to our setting a technique of S.-Y. Hsu (Lemma 2.1 in \cite{Hsu12}), which avoids the use of the phase-plane method. 

\begin{remark}
Although the above result only includes the odd-Hessian, we also find explicit solutions for the even-Hessian for certain values of the parameters $\alpha$ and $\beta$. This includes the important case when $\alpha=n\beta$, for which we unify these $k$-Hessian operators. See Section \ref{Exact aeqnb}.
\end{remark}

The plan of the paper is as follows. In Section \ref{Existence} we will establish the existence and prove various properties of the solutions of \eqref{eq:goveq}, \eqref{eq:inicond}. The proof of Theorem \ref{mainth} appears in this section. In Section 2 we also give explicit self-similar solutions for particular values of the parameters. Section \ref{Exact aeqnb} is devoted to constructing explicit solutions in self-similar form for equation \eqref{eq:k-evol} when $\alpha=n\beta$. Using the results of Section 3, in Section \ref{familybu} we obtain an explicit family of self-similar solutions of type II that blow up in finite time. In Section \ref{buHE} we find, for the heat equation, a whole family of self-similar solutions that blow up in finite time. This family is given in terms of a confluent hypergeometric function of the first kind, which is called the Kummer function. Finally, the local existence of a solution is proved in the Appendix.

\section{Existence of solutions}\label{Existence}
In this section we prove the existence of radially symmetric solutions $v$ of equation \eqref{eq:maineq} which satisfy $v(0)=a$.
Before giving the proof of the existence result, we give an example of explicit solutions for particular values of the parameters $\alpha$ and $\beta$. Note that if $\alpha=\beta=0$, then the constant function $v=a$ is the unique solution of \eqref{eq:goveq} and \eqref{eq:inicond}, and $u$ is a constant solution of \eqref{eq:k-evol}. 

Let $\alpha=0$ and $\beta\neq 0$. Here equation \eqref{eq:goveq} can be solved explicitly: either $v'\equiv 0$ (hence $v=a$ and $u$ is a constant solution of
\eqref{eq:k-evol}, as in case $\alpha=\beta=0$), or there exist explicit profiles
\begin{equation}
v(r)=
\label{eq:explicitv}
\begin{cases}
a-C_+r^{\frac{2k}{k-1}}\;\;\mbox{in}\;\; (0,\overline{r})\;\;\mbox{if}\;\; \beta>0\; \mbox{and } k\; \mbox{even},\\
a+C_-r^{\frac{2k}{k-1}}\;\;\mbox{in}\;\; (0,\infty)\;\;\mbox{if}\;\; \beta<0,
\end{cases}
\end{equation}
where $C_{\pm}=\left(\frac{k-1}{2k}\right)\left(\frac{(k-1)(\pm\beta)}{(n(k-1)+2k)c_{n,k}}\right)^{\frac{1}{k-1}}$ and $\overline{r}=\left(\frac{a}{C_+}\right)^{\frac{k-1}{2k}}$.
In particular, for these profiles we have two self-similar solutions of \eqref{eq:k-evol}, one of type I and other of type II. The corresponding exponents $\beta$ are $\frac{1}{k}$ and $-\frac{1}{k}$, respectively. More precisely,
\begin{equation}\label{eq:betaposi}
u(t,x)=
\begin{cases}
a-C_+t^{-\frac{1}{k-1}}\abs{x}^{\frac{2k}{k-1}},\;\; 0\leq \abs{x}<t^{\frac{1}{2k}}\overline{r},\; k\; \mbox{even},\\
0,\;\;\abs{x}\geq t^{\frac{1}{2k}}\overline{r}
\end{cases}
\end{equation}
and
\begin{equation}\label{eq:betanega}
u(t,x)=a+C_-(T-t)^{-\frac{1}{k-1}}\abs{x}^\frac{2k}{k-1},\;\; (t,x)\in (0,T)\times\mathbb{R}^n.
\end{equation}

\begin{remark}
Note that the solution in \eqref{eq:betaposi} has compact support in space and this support expands, that is, increases from $\{0\}$ as $t$ increases from $0$. In fact, supp\,$u(t,\cdot)\subseteq B\left(0,\,t^{\frac{1}{2k}}\overline{r}\right)$. On the other hand, the solution in \eqref{eq:betanega} is defined on the whole space $\mathbb{R}^n$ and blows up everywhere in $x$ as $t\uparrow T$.
\end{remark}

\begin{lemma}\label{vbeha}
Let $k$ be an odd integer, $1\leq k\leq n$. Let $\alpha,\beta\neq 0$. Assume that $\alpha$ and $\beta$ satisfy inequality
\begin{equation}\label{eq: anybeta}
\frac{k\alpha}{\beta}\leq n-2k.
\end{equation}
For any $R_0>0$ and $a>0$, let $v$ be the solution of \eqref{eq:goveq} and \eqref{eq:inicond} in $(0,R_0)$. Then
\begin{equation}\label{eq:vineq}
v(r)+\delta rv'(r)>0\;\;\mbox{in}\;\; [0,R_0)
\end{equation}
and
\begin{equation}
\label{eq:singvprime}
\begin{cases}
v'(r)<0\;\;\mbox{in}\;\; (0,R_0)\;\;\mbox{if}\;\; \alpha>0,\\
v'(r)>0\;\;\mbox{in}\;\; (0,R_0)\;\;\mbox{if}\;\; \alpha<0.
\end{cases}
\end{equation}
\end{lemma}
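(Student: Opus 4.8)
The plan is to exploit the observation that $\alpha v+\beta r v'=\alpha\,(v+\delta rv')=\alpha\,g(r)$, where I abbreviate $g(r):=v(r)+\delta rv'(r)$, the quantity appearing in \eqref{eq:vineq} (here $\delta=\beta/\alpha$). With this notation \eqref{eq:goveq} reads $c_{n,k}r^{1-n}\big(r^{n-k}(v')^{k}\big)'=-\alpha g$. Integrating from $0$ to $r$ and using $v'(0)=0$, so that $r^{n-k}(v')^{k}$ vanishes at the origin (recall $n\ge k$), yields the representation $r^{n-k}(v')^{k}=-\frac{\alpha}{c_{n,k}}\int_{0}^{r}s^{n-1}g(s)\,ds$. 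Since $k$ is odd, $(v')^{k}$ and $v'$ share the same sign; hence on any interval where $g>0$ the right-hand side forces $v'<0$ when $\alpha>0$ and $v'>0$ when $\alpha<0$. This shows that \eqref{eq:singvprime} is a \emph{consequence} of \eqref{eq:vineq}, so the whole lemma reduces to proving $g>0$ on $[0,R_0)$.

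To establish $g>0$ I would argue by contradiction: let $r_{0}\in(0,R_{0})$ be the first zero of $g$, so that $g>0$ on $[0,r_{0})$, $g(r_{0})=0$, and therefore $g'(r_{0})\le 0$. The central computation is to recast $g$ as the solution of a first-order linear ODE. Differentiating $g=v+\delta rv'$, solving \eqref{eq:goveq} for $v''$ and substituting, I expect to reach an identity of the form
\[
g'(r)=B\,v'(r)+A(r)\,g(r),\qquad B=1-\frac{\delta(n-2k)}{k},\quad A(r)=-\frac{\delta\alpha\,r^{k}}{k\,c_{n,k}\,(v')^{k-1}} .
\]
Because $k$ is odd, $(v')^{k-1}>0$ wherever $v'\neq 0$, so $A$ is a genuine finite, continuous coefficient on $[0,r_{0})$ which moreover vanishes at the origin. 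Evaluating the identity at $r_{0}$ gives $g'(r_{0})=B\,v'(r_{0})$, and the contradiction must come from showing that this quantity is strictly positive.

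The decisive step is the sign bookkeeping. Rewriting the hypothesis \eqref{eq: anybeta} as $\frac{\beta(n-2k)}{k\alpha}\ge 1$ or $\le 1$ according to the sign of $\delta$ controls $B$: one obtains $B\le 0$ when $\delta>0$ and $B\ge 0$ when $\delta<0$, that is $\operatorname{sign}B=-\operatorname{sign}(\alpha\beta)$. Combined with $\operatorname{sign}v'=-\operatorname{sign}\alpha$ from the first paragraph, the forcing term satisfies $\operatorname{sign}\big(Bv'(r_{0})\big)=\operatorname{sign}\beta$; and $v'(r_{0})\neq 0$, since the integral representation gives $r_{0}^{\,n-k}(v'(r_{0}))^{k}=-\frac{\alpha}{c_{n,k}}\int_{0}^{r_{0}}s^{n-1}g\,ds\neq 0$ (the integrand is positive on $(0,r_{0})$). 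Hence, when $\beta>0$ and $B\neq 0$, we get $g'(r_{0})=Bv'(r_{0})>0$, contradicting $g'(r_{0})\le 0$ and proving $g>0$ on $[0,R_{0})$.

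The boundary case $B=0$ (equality $\frac{k\alpha}{\beta}=n-2k$) is immediate: the identity degenerates to the homogeneous equation $g'=A(r)g$, whose solution $g(r)=a\exp\!\big(\int_{0}^{r}A\big)$ is positive everywhere, so $g$ cannot vanish. I expect the main obstacle to be precisely this sign analysis of the forcing term: the first-zero argument closes exactly when $\operatorname{sign}(Bv')\ge 0$, i.e.\ in the regime $\beta>0$ that is needed for Theorem \ref{mainth}. In the case $\beta<0$ one has instead $Bv'\le 0$, so the first-zero test is inconclusive and a finer argument — extracting a lower bound for $g$ directly from the linear ODE through its integrating factor $\exp\!\big(-\int_{0}^{r}A\big)$ and the integral representation — would be required.
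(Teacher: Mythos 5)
Your identity $g'=Bv'+A(r)g$ with $B=1-\frac{\delta(n-2k)}{k}$ and $A(r)=-\frac{\beta}{kc_{n,k}}r^k(v')^{1-k}$ is correct, and it is in fact the very same linear ODE the paper derives in \eqref{eq:hineq}: setting $p=\frac{n-2k}{k}-\frac{1}{\delta}\ge 0$ (this is where \eqref{eq: anybeta} enters), one has $Bv'=-\delta p\,v'=\frac{p}{r}(v-g)$, so the paper simply moves the piece $-\frac{p}{r}g$ to the left-hand side and is left with the nonnegative forcing $\frac{p}{r}v$. Your reduction of \eqref{eq:singvprime} to \eqref{eq:vineq} via the integrated form of \eqref{eq:goveq} and the oddness of $k$ is also exactly the paper's argument.

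The genuine gap is the one you flag yourself at the end: your first-zero argument closes only when $\operatorname{sign}\bigl(Bv'(r_0)\bigr)=\operatorname{sign}\beta$ is positive, whereas the lemma is stated for arbitrary nonzero $\beta$ satisfying \eqref{eq: anybeta} (for instance $\alpha>0$, $\beta<0$, $n\ge 2k$ is admissible), so as written your proof does not cover all cases of the statement. The ``finer argument'' you anticipate is precisely the rewriting above: with the forcing expressed as $\frac{p}{r}v\ge 0$, the integrating factor $r^{p}f(r)$ with $f(r)=\exp\bigl(\frac{\beta}{kc_{n,k}}\int_0^r\rho^k(v'(\rho))^{1-k}d\rho\bigr)$ (well defined near $0$ because $(v'/r)^k\to -\alpha a/(nc_{n,k})\ne 0$) makes $r^{p}f g$ nondecreasing; since this quantity is positive for small $r>0$ because $g(0)=a>0$, positivity of $g$ follows on all of $(0,R_0)$ with no case distinction on the sign of $\beta$ and no separate treatment of $B=0$. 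For the range $\beta>0$ actually used in Theorem \ref{mainth} your argument is complete and correct -- the nonvanishing of $v'(r_0)$ and the finiteness of $A$ on $[0,r_0]$ are justified as you indicate -- so the only substantive defect is the uncovered $\beta<0$ regime, which the paper's formulation of the same ODE handles for free.
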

\begin{proof}
Let $h(r)=v(r)+\delta rv'(r)$. By \eqref{eq:maincond}, we have $n-2k\geq k/\delta$. Then, by direct computation,
\begin{equation}\label{eq:hineq}
h'+\left(\frac{\frac{n-2k}{k}-\frac{1}{\delta}}{r}+\frac{\beta}{kc_{n,k}}r^k(v')^{1-k}\right)h=\frac{\frac{n-2k}{k}-\frac{1}{\delta}}{r}v\geq 0\;\;\mbox{in}\;\; (0,R_0).
\end{equation}
Now let 
\begin{equation}\label{eq:efe}
f(r)=\exp\left(\frac{\beta}{kc_{n,k}}\int_0^r\rho^k(v'(\rho))^{1-k}d\rho\right).
\end{equation}
Note that the function $f(r)$ is well-defined since the integrand has a finite limit at $\rho=0$. In fact, using \eqref{eq:goveq}, \eqref{eq:inicond} and L'Hopital's rule, we have 
\[
\lim_{r\downarrow 0}\left(\frac{v'(r)}{r}\right)^k=\lim_{r\searrow 0}\frac{-\alpha}{c_{n,k}}\frac{r^{n-1}h(r)}{nr^{n-1}}=\frac{-\alpha a}{nc_{n,k}}\in\mathbb{R}\setminus\{0\}.
\]
By \eqref{eq:hineq}
\begin{eqnarray*}
(r^{\frac{n-2k}{k}-\frac{1}{\delta}}f(r)h(r))'&\geq 0&\;\forall\; 0<r<R_0\\
&\Rightarrow& r^{\frac{n-2k}{k}-\frac{1}{\delta}}f(r)h(r)>0\;\forall\; 0<r<R_0\\
&\Rightarrow& h(r)>0\;\forall\; 0<r<R_0
\end{eqnarray*}
and \eqref{eq:vineq} follows. By \eqref{eq:goveq}, \eqref{eq:inicond} and \eqref{eq:vineq},
\begin{eqnarray*}
c_{n,k}r^{1-n}(r^{n-k}(v')^k)'&=&-\alpha h(r)\begin{cases}
<0\;\;\mbox{in}\;\; (0,R_0)\;\;\mbox{if}\;\; \alpha>0,\\
>0\;\;\mbox{in}\;\; (0,R_0)\;\;\mbox{if}\;\; \alpha<0
\end{cases}\\
&\Rightarrow&\begin{cases}
r^{n-k}(v')^k<0\;\;\mbox{in}\;\; (0,R_0)\;\;\mbox{if}\;\; \alpha>0,\\
r^{n-k}(v')^k>0\;\;\mbox{in}\;\; (0,R_0)\;\;\mbox{if}\;\; \alpha<0
\end{cases}
\end{eqnarray*}
and \eqref{eq:singvprime} follows, since $k$ is an odd integer.
\end{proof}

We are now ready to prove Theorem \ref{mainth}.

\begin{proof}[\bf Proof of Theorem \ref{mainth}.] 
We may assume that $\alpha\neq 0$, since the solutions are given explicitly by \eqref{eq:betaposi}-\eqref{eq:betanega} when $\alpha=0$.
We note that the uniqueness of the solutions of \eqref{eq:goveq} and \eqref{eq:inicond} in $(0,\infty)$ follows from the standard ODE theory. Hence we only need to prove the existence of solutions of \eqref{eq:goveq} and \eqref{eq:inicond} in $(0,\infty)$. The local existence of solutions of \eqref{eq:goveq} and \eqref{eq:inicond} in a neighbourhood of the origin follows from classical arguments involving the Banach fixed point theorem (see the Apenndix).

Let $(0,R_0)$ be the maximal interval of the existence of solution of \eqref{eq:goveq} and \eqref{eq:inicond}. Suppose $R_0<\infty$. Then there exists a sequence $\{r_i\}_{i=1}^{\infty},\, r_i\uparrow R_0$ as $i\rightarrow\infty$ such that either
\[
\abs{v'(r_i)}\rightarrow\infty\;\; \mbox{as}\;\; i\rightarrow\infty\;\; \mbox{or}\;\; v(r_i)\downarrow 0\;\; \mbox{as}\;\; i\rightarrow\infty\;\; \mbox{or}\;\; v(r_i)\rightarrow\infty\;\; \mbox{as}\;\; i\rightarrow\infty.
\]
By Lemma \ref{vbeha}, we have
\begin{equation}\label{eq:Eprima}
E'(r)=2rv^{2\delta}+2\delta r^2v^{2\delta-1}v'=2rv^{2\delta-1}(v+\delta rv')>0\;\; \forall\;\; 0<r<R_0.
\end{equation}
We divide the proof into two cases, depending on the sign of $\alpha$.

Case 1: $\alpha>0$. 

By \eqref{eq:maincond}, inequality \eqref{eq: anybeta} holds. Hence by \eqref{eq:Eprima},
\[
E(r)=r^2v^{2\delta}\geq E(R_0/2)>0\;\;\forall\;\; R_0/2\leq r<R_0
\]
\begin{equation}\label{eq:vlowerbound}
\Rightarrow v(r)\geq (R_0^{-2}E(R_0/2))^{\frac{1}{2\delta}}\;\;\forall\;\; R_0/2\leq r<R_0
\end{equation}
By Lemma \ref{vbeha}, we have $v'<0$ on $(0,R_0)$.  Hence
\begin{equation}\label{eq:vupperbound}
0<v(r)\leq v(0)=a\;\;\forall\;\; 0\leq r<R_0.
\end{equation}
By \eqref{eq:goveq}, \eqref{eq:inicond} and \eqref{eq:vupperbound}, if $0<r<R_0$, we have
\begin{eqnarray*}
&&c_{n,k}r^{1-n}(r^{n-k}(v')^k)'=-(\alpha v+\beta rv')\\
&\Rightarrow& c_{n,k}r^{n-k}(v')^k=-\left(\alpha\int_0^r\rho^{n-1}v(\rho)d\rho+\beta\int_0^r\rho^{n}v'(\rho)d\rho\right)
\end{eqnarray*}
\begin{equation}\label{eq:byparts}
\Rightarrow c_{n,k}r^{n-k}(v')^k=-\beta r^nv(r)+(n\beta-\alpha)\int_0^r\rho^{n-1}v(\rho)d\rho
\end{equation}
\begin{eqnarray*}
&\Rightarrow& c_{n,k}(v')^k=-\beta r^kv(r)+\frac{n\beta-\alpha}{r^{n-k}}\int_0^r\rho^{n-1}v(\rho)d\rho\\
&\Rightarrow& c_{n,k}\abs{v'(r)}^k\leq\left(\beta+\frac{\abs{n\beta-\alpha}}{n}\right)R_0^kv(0)
\end{eqnarray*}
\begin{equation}\label{eq:vprimeupperbound}
c_{n,k}\abs{v'(r)}\leq\left(\beta+\frac{\abs{n\beta-\alpha}}{n}\right)^\frac{1}{k}R_0v(0)^\frac{1}{k}.
\end{equation}
By \eqref{eq:vlowerbound}, \eqref{eq:vupperbound} and \eqref{eq:vprimeupperbound}, we have obtained a contradiction. Hence no such sequence $\{r_i\}_{i=1}^{\infty}$ exists. Thus $R_0=\infty$ and there exists a unique solution of \eqref{eq:goveq} and \eqref{eq:inicond} in $(0,\infty)$.

Case 2: $\alpha<0$. 
By Lemma \ref{vbeha},
\begin{equation}\label{eq:vvprime}
0<v'(r)\leq\frac{v(r)}{\abs{\delta}r}\;\; \mbox{in}\;\; (0,R_0).
\end{equation}
Choose $r_0\in(0,R_0)$ and let $C_0=\max_{0\leq r\leq r_0}v'(r)$. Then by \eqref{eq:vvprime}
\[
0<v'(r)\leq C_0+\frac{v(r)}{\abs{\delta}r}\leq Cv(r)\;\; \forall\;\; r\in(0,R_0),
\]
where $C=\frac{C_0}{v(0)}+(\abs{\delta}r_0)^{-1}>0$. Then
\begin{equation}\label{eq:vbound}
v(0)\leq v(r)\leq v(0)\exp(CR_0)\;\; \forall\;\; 0\leq r<R_0
\end{equation}
and
\begin{equation}\label{eq:vprimebound}
0<v'(r)\leq Cv(0)\exp(CR_0)\;\; \forall\;\; 0\leq r<R_0.
\end{equation}
By \eqref{eq:vbound} and \eqref{eq:vprimebound}, we again obtain a contradiction. Hence no such sequence $\{r_i\}_{i=1}^{\infty}$ exists. Thus $R_0=\infty$ and there exists a unique solution of \eqref{eq:goveq} and \eqref{eq:inicond} in $(0,\infty)$. By \eqref{eq:Eprima} and Cases 1 and 2, the theorem follows.
\end{proof}

\begin{lemma}\label{lem:betacero}
Let $k$ odd. Let $a>0, \alpha<0$ and $\beta=0$. Then \eqref{eq:goveq} and \eqref{eq:inicond} have a solution in $(0,\infty)$.
\end{lemma}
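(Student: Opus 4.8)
The plan is to reproduce the continuation scheme used in Case 2 ($\alpha<0$) of the proof of Theorem \ref{mainth}, but adapted to the degenerate situation $\beta=0$ where Lemma \ref{vbeha} is not available (it assumes $\alpha,\beta\neq 0$). First I would invoke the Appendix to obtain a local solution of \eqref{eq:goveq}, \eqref{eq:inicond} near the origin, let $(0,R_0)$ be its maximal interval of existence, and argue by contradiction assuming $R_0<\infty$. As in the main theorem, this produces a sequence $r_i\uparrow R_0$ along which either $\abs{v'(r_i)}\to\infty$, or $v(r_i)\downarrow 0$, or $v(r_i)\to\infty$. The whole proof then reduces to establishing two-sided bounds on $v$ together with an upper bound on $v'$, which rule out all three alternatives simultaneously.

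The starting point is that with $\beta=0$ the equation integrates explicitly. Multiplying \eqref{eq:goveq} by $r^{n-1}$ and integrating from $0$ to $r$, using $v'(0)=0$ from \eqref{eq:inicond} so that the boundary term at the origin vanishes, gives
\[
c_{n,k}\, r^{n-k}(v'(r))^k=-\alpha\int_0^r\rho^{n-1}v(\rho)\,d\rho=\abs{\alpha}\int_0^r\rho^{n-1}v(\rho)\,d\rho .
\]
Since $\alpha<0$ and $v>0$, the right-hand side is strictly positive, so $(v'(r))^k>0$; because $k$ is odd this forces $v'(r)>0$ on $(0,R_0)$. Hence $v$ is strictly increasing and $v(r)\geq v(0)=a$ for all $r\in[0,R_0)$, which already supplies the lower bound and discards the possibility $v(r_i)\downarrow 0$.

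Next I would extract the key a priori estimate from monotonicity. Bounding $v(\rho)\leq v(r)$ in the integral above yields
\[
c_{n,k}(v'(r))^k\leq\frac{\abs{\alpha}}{n}\,r^k v(r),
\]
and since $k\geq 1$ and $v\geq a$ one has $v(r)^{1/k}\leq a^{(1-k)/k}v(r)$, so the superficially nonlinear bound becomes the linear differential inequality $v'(r)\leq C\,r\,v(r)$ with $C=\bigl(\abs{\alpha}/(n c_{n,k})\bigr)^{1/k}a^{(1-k)/k}>0$. Integrating $(\log v)'\leq Cr$ from $0$ to $r$ gives $v(r)\leq a\exp(CR_0^2/2)$ on $[0,R_0)$, an upper bound that excludes $v(r_i)\to\infty$; substituting this back into the displayed estimate bounds $v'$ as well, excluding $\abs{v'(r_i)}\to\infty$. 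With all three alternatives impossible we contradict $R_0<\infty$, so $R_0=\infty$ and the solution exists on $(0,\infty)$.

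The only delicate point I anticipate is the passage $v(r)^{1/k}\leq a^{(1-k)/k}v(r)$, which rests on the exponent $(1-k)/k$ being nonpositive (i.e. $k\geq 1$) combined with the lower bound $v\geq a$ derived in the second step. This is precisely what linearizes the estimate on $v'$ and lets the Gronwall-type integration close the argument, exactly mirroring Case 2 of Theorem \ref{mainth}; everything else is a routine repetition of the continuation argument.
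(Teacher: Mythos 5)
Your proposal is correct and follows essentially the same route as the paper: integrate the equation once to obtain $c_{n,k}r^{n-k}(v'(r))^k=\abs{\alpha}\int_0^r\rho^{n-1}v(\rho)\,d\rho$, deduce $v'>0$ from the oddness of $k$ (hence $v\geq a$), and close the continuation argument with the a priori bound $(v'(r))^k\leq\frac{\abs{\alpha}}{nc_{n,k}}r^kv(r)$. The only divergence is in the final integration: the paper integrates the separable inequality $v^{-1/k}v'\leq Cr$ directly, yielding a power-type bound whose exponent $\frac{k}{k-1}$ degenerates at $k=1$ (a case the paper only addresses parenthetically in the following lemma), whereas your substitution $v(r)^{1/k}\leq a^{(1-k)/k}v(r)$ linearizes the estimate and lets Gronwall handle all odd $k\geq 1$ uniformly --- a marginally cleaner finish to the same argument.
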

\begin{proof}
Let $a>0$ and denote by $(0,R_0)$ the maximal interval where a solution $v$ of \eqref{eq:goveq} and \eqref{eq:inicond} exists. Suppose $R_0<\infty$. Then there exists a sequence $\{r_i\}_{i=1}^{\infty},\, r_i\uparrow R_0$ as $i\rightarrow\infty$ such that either
\[
\abs{v'(r_i)}\rightarrow\infty\;\; \mbox{as}\;\; i\rightarrow\infty\;\; \mbox{or}\;\; v(r_i)\downarrow 0\;\; \mbox{as}\;\; i\rightarrow\infty\;\; \mbox{or}\;\; v(r_i)\rightarrow\infty\;\; \mbox{as}\;\; i\rightarrow\infty.
\]
Multiplying \eqref{eq:goveq} by $r^{n-1}$ and taking $\beta=0$, we have
\begin{equation}\label{eq:Fprima}
(c_{n,k}r^{n-k}(v'(r))^k)'=\abs{\alpha}r^{n-1}v(r)>0\;\; \forall\;\; 0<r<R_0.
\end{equation}
Since $k$ is odd, it follows that $v'>0$ on $(0,R_0)$. Hence
\begin{equation}\label{eq:vlowerbound2}
0<a=v(0)\leq v(r)\;\;\forall\;\; 0\leq r<R_0.
\end{equation}

Integrating \eqref{eq:Fprima}, if $0<r<R_0$, 
\begin{eqnarray*}
&&r^{n-k}(v'(r))^k=\frac{\abs{\alpha}}{c_{n,k}}\int_0^r s^{n-1}v(s)ds\\
&\Rightarrow&r^{n-k}(v'(r))^k\leq\frac{\abs{\alpha}}{nc_{n,k}}r^n v(r)\\
&\Rightarrow&(v'(r))^k\leq\frac{\abs{\alpha}}{nc_{n,k}}r^k v(r)\\
\end{eqnarray*}
\begin{equation*}
\Rightarrow 0<v'(r)\leq\left(\frac{\abs{\alpha}}{nc_{n,k}}\right)^{\frac{1}{k}}r (v(r))^\frac{1}{k}.
\end{equation*}
Then 
\begin{equation}\label{eq:vbound2}
v(0)\leq v(r)\leq\left(v(0)^\frac{k-1}{k}+\left(\frac{\abs{\alpha}}{nc_{n,k}}\right)^{\frac{1}{k}}\frac{R_{0}^2}{2}\right)^\frac{k}{k-1}:=C\;\;\forall\;\; 0\leq r<R_0
\end{equation}
and 
\begin{equation}\label{eq:vprimabound2}
0<v'(r)\leq\left(\frac{\abs{\alpha}C}{nc_{n,k}}\right)^{\frac{1}{k}}R_0\;\;\forall\;\; 0\leq r<R_0.
\end{equation}
By \eqref{eq:vlowerbound2}, \eqref{eq:vbound2} and \eqref{eq:vprimabound2}, we obtain a contradiction. Hence no such sequence $\{r_i\}_{i=1}^{\infty}$ exists. Thus $R_0=\infty$ and there exists a unique solution of \eqref{eq:goveq} and \eqref{eq:inicond} in $(0,\infty)$.
\end{proof}
In particular, when $k>1$ is an odd integer, we obtain a self-similar solution of type II for equation \eqref{eq:k-evol}, which blows up in finite time. The solution is given in separated variables of the form
\[
u(t,x)=(T-t)^{-\frac{1}{k-1}}v(x),
\]
where $v(x)=v(\abs{x})$ is the unique positive solution of the equation $S_k(D^2 v)=\frac{1}{k-1}v$ in $\mathbb{R}^n$. 

\begin{lemma}\label{lem:negativeexpo}
Let $k$ odd. Let $a>0$ and $0>n\beta\geq\alpha$. Then \eqref{eq:goveq} and \eqref{eq:inicond} have a solution in $(0,\infty)$.
\end{lemma}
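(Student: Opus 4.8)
The plan is to follow closely the strategy of Lemma \ref{lem:betacero}, since the hypotheses $0>n\beta\geq\alpha$ place us in a regime where Lemma \ref{vbeha} no longer applies (here $k\alpha/\beta\geq nk>n-2k$, so \eqref{eq: anybeta} fails), but where the integration-by-parts identity already derived in the proof of Theorem \ref{mainth} can be exploited directly. First I would denote by $(0,R_0)$ the maximal interval of existence furnished by the local theory and argue by contradiction, assuming $R_0<\infty$ and extracting a sequence $r_i\uparrow R_0$ along which $\abs{v'}\to\infty$, or $v\downarrow 0$, or $v\to\infty$.

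The crucial first step is to pin down the sign of $v'$. Multiplying \eqref{eq:goveq} by $r^{n-1}$, integrating, and integrating the term $\beta\int_0^r\rho^n v'\,d\rho$ by parts (the boundary contribution at the origin vanishing), I obtain exactly identity \eqref{eq:byparts},
\[
c_{n,k}r^{n-k}(v')^k=-\beta r^n v(r)+(n\beta-\alpha)\int_0^r\rho^{n-1}v(\rho)\,d\rho.
\]
Under the hypothesis $0>n\beta\geq\alpha$ both contributions on the right are nonnegative: $-\beta r^n v(r)>0$ because $\beta<0$ and $v>0$, while $n\beta-\alpha\geq 0$ by assumption and the integral is positive. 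Hence $(v')^k>0$ on $(0,R_0)$, and since $k$ is odd this forces $v'>0$. In particular $v$ is increasing, so $v(r)\geq v(0)=a>0$, which rules out the alternative $v(r_i)\downarrow 0$.

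Next I would derive the upper bounds. Because $v$ is increasing, $\int_0^r\rho^{n-1}v(\rho)\,d\rho\leq v(r)\,r^n/n$, and substituting this into the identity above the $\beta$-terms cancel cleanly: the coefficient $-\beta+(n\beta-\alpha)/n$ collapses to $-\alpha/n=\abs{\alpha}/n$, giving
\[
(v'(r))^k\leq\frac{\abs{\alpha}}{nc_{n,k}}\,r^k v(r),\qquad\text{hence}\qquad 0<v'(r)\leq\left(\frac{\abs{\alpha}}{nc_{n,k}}\right)^{1/k} r\,v(r)^{1/k}.
\]
This is precisely the differential inequality appearing in Lemma \ref{lem:betacero}, so integrating $v^{-1/k}v'$ against $r$ yields a bound of the form $v(r)\leq C$ on $[0,R_0)$, with $C$ depending only on $a,\alpha,n,k$ and $R_0$; feeding this back into the displayed inequality then bounds $v'$ as well. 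Thus none of the three blow-up alternatives can occur, contradicting $R_0<\infty$ and proving $R_0=\infty$; uniqueness follows from standard ODE theory.

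The only genuinely new point compared with the $\beta=0$ case is the verification that the identity \eqref{eq:byparts} still yields $v'>0$. Once one observes that the hypothesis $0>n\beta\geq\alpha$ makes \emph{both} terms on the right-hand side nonnegative (with the first strictly positive), everything else is a routine repetition of the argument in Lemma \ref{lem:betacero}, and I expect no substantive obstacle beyond this sign check.
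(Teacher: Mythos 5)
Your proposal is correct and follows essentially the same route as the paper: establish $v'>0$ from the sign of both terms on the right of \eqref{eq:byparts}, estimate $\int_0^r\rho^{n-1}v\,d\rho\leq v(r)r^n/n$ to collapse the coefficient to $\abs{\alpha}/n$, and then conclude exactly as in Lemma \ref{lem:betacero}. The only cosmetic difference is that you treat $n\beta>\alpha$ and $n\beta=\alpha$ uniformly, whereas the paper handles the strict case and remarks that the equality case is similar.
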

\begin{proof}
Let $a>0$ and denote by $(0,R_0)$ the maximal interval where a solution $v$ of \eqref{eq:goveq} and \eqref{eq:inicond} exists. Suppose $R_0<\infty$. Then there exists a sequence $\{r_i\}_{i=1}^{\infty},\, r_i\uparrow R_0$ as $i\rightarrow\infty$ such that either
\[
\abs{v'(r_i)}\rightarrow\infty\;\; \mbox{as}\;\; i\rightarrow\infty\;\; \mbox{or}\;\; v(r_i)\downarrow 0\;\; \mbox{as}\;\; i\rightarrow\infty\;\; \mbox{or}\;\; v(r_i)\rightarrow\infty\;\; \mbox{as}\;\; i\rightarrow\infty.
\]
We only consider the case $0>n\beta>\alpha$ (since the case $0>n\beta=\alpha$ is similar). By \eqref{eq:byparts}, $c_{n,k}r^{n-k}(v'(r))^k>0$ for all $0<r<R_0$. Hence $v'(r)>0$ for all $0<r<R_0$. Then by \eqref{eq:byparts},
\[
c_{n,k}r^{n-k}(v')^k\leq-\beta r^nv(r)-(\alpha-n\beta)\int_0^r\rho^{n-1}v(r)d\rho=\frac{\abs{\alpha}}{n}r^nv(r)	
\]
The rest of the proof is as in Lemma \ref{lem:betacero}, with the obvious changes in case $k=1$.
\end{proof}

\section{Exact solutions for $\alpha=n\beta\neq 0$}\label{Exact aeqnb}
Let $k\neq 1$. In this section we consider the case $\alpha=n\beta$. We construct explicit solutions in self-similar form. By \eqref{eq:byparts}, we obtain the separable ODE
\begin{equation}\label{eq:exacteq}
c_{n,k}r^{n-k}(v'(r))^k=-\beta r^nv(r).
\end{equation}
We solve this equation under the conditions $v>0$ and $v(0)=a>0$. For this, we consider two cases.
\medskip

{\bf Case 1}: $\alpha<0$. From \eqref{eq:exacteq}, we obtain
\[
(v'(r))^k=\frac{\abs{\beta}}{c_{n,k}}r^kv(r)=\frac{\abs{\alpha}}{nc_{n,k}}r^kv(r).
\]
Now, according to the parity of $k$, we have two subcases.
\begin{itemize}
\item[$\bullet$] If $k$ is odd, then
\[
v'(r)=\left(\frac{\abs{\alpha}}{nc_{n,k}}\right)^\frac{1}{k}rv(r)^\frac{1}{k}>0.
\]
Integrating this equation from 0 to $r$, we obtain
\begin{eqnarray*}
\frac{k}{k-1}\left(v^\frac{k-1}{k}-a^\frac{k-1}{k}\right)&=&\left(\frac{\abs{\alpha}}{nc_{n,k}}\right)^\frac{1}{k}\frac{r^2}{2}\\
v^\frac{k-1}{k}&=&a^\frac{k-1}{k}+\frac{k-1}{2k}\left(\frac{\abs{\alpha}}{nc_{n,k}}\right)^\frac{1}{k}r^2\\
v(r)&=&\left(a^\frac{k-1}{k}+\frac{k-1}{2k}\left(\frac{\abs{\alpha}}{nc_{n,k}}\right)^\frac{1}{k}r^2\right)^\frac{k}{k-1}\;\; \forall r>0.
\end{eqnarray*}

\item[$\bullet$] If $k$ is even, then
\[
\abs{v'(r)}=\left(\frac{\abs{\alpha}}{nc_{n,k}}\right)^\frac{1}{k}rv(r)^\frac{1}{k}.
\]
Note that if $v'(r)>0$, then we arrive at the same solution as in the case $k$ odd. 

If $v'(r)<0$, we have
\[
v^{-\frac{1}{k}}v'=-\left(\frac{\abs{\alpha}}{nc_{n,k}}\right)^\frac{1}{k}r<0.
\]
Integrating this equation from 0 to $r$ (as long as $v>0$), we obtain
\[
v(r)=\left(a^\frac{k-1}{k}-\frac{k-1}{2k}\left(\frac{\abs{\alpha}}{nc_{n,k}}\right)^\frac{1}{k}r^2\right)^\frac{k}{k-1}\;\; \forall\;\;  0\leq r<r^*,
\]
where $r^*$ is the positive root of the quadratic equation $a^\frac{k-1}{k}-\frac{k-1}{2k}\left(\frac{\abs{\alpha}}{nc_{n,k}}\right)^\frac{1}{k}r^2=0$.
Recall that we are looking for a solution defined on the whole space. So, if we do not impose the condition $v>0$, then the solution may take zero or negative values. To avoid this, we can cut off the unwanted part of the solution. To do so, define $v(r)=0$ for all $r\geq r^*$. This has the effect of extending the solution $v$ to all $r\geq 0$.
\end{itemize}

{\bf Case 2}: $\alpha>0$. Clearly $k$ odd is allowed and thus $v'(r)<0$ (as long as $v>0$). In this case, from \eqref{eq:exacteq}, we obtain
\[
(-v'(r))^k=\frac{\alpha}{nc_{n,k}}r^kv(r)>0.
\]
Solving this equation we obtain the same solution as in Case 1, where $k$ is even and $\alpha<0$.

We conclude that, when $v'<0$ and $\alpha=n\beta$, it is possible to unify Cases 1 and 2 for all $k$. Further, in this case we can write 
\begin{equation}\label{eq:vprofile}
v(r)=\left(C-\frac{k-1}{k}\left(\frac{\beta}{c_{n,k}}\right)^\frac{1}{k}\frac{r^2}{2}\right)_+^{\frac{k}{k-1}},\;\; r\geq 0,
\end{equation}
where $C=a^\frac{k-1}{k}$ and $(\cdot)_+$ denotes the positive part. Observe that this class of profiles is similar to the classical class of Barenblatt profiles for the porous medium equation ($m>1$) and the $p$-Laplacian evolution equation ($p>2$), as we noted in the Introduction. See, e.g., \cite{ISVa08}. 

Note that the quantity $M(t)$ defined in \eqref{eq:mass} is conserved for self-similar solutions when $\alpha=n\beta$ and the profile $v$ is integrable in $\mathbb{R}^n$, such as in \eqref{eq:vprofile}. This is the case for self-similar solutions of type I with $\beta=\frac{1}{n(k-1)+2k}>0$, and also type II with $\beta=-\frac{1}{n(k-1)+2k}<0$. However, $M(t)$ is not conserved for eternal solutions of type III for any $\beta\neq 0$ since, in the case of our equation, for those solutions the exponents $\alpha$ and $\beta$ have opposite sings.

In case of type I solutions, we recover the family of self-similar positive solutions with finite \lq\lq mass'' found in \cite{Sanchez20}. For completeness, we recall its explicit form:
\begin{equation}\label{eq:k-Baren}
U_C(t,x)=t^{-\alpha}\left(C-\gamma\left(\frac{\abs{x}}{t^\beta}\right)^2\right)_{+}^\frac{k}{k-1},
\end{equation}
where $(\cdot)_+$ denotes the positive part, $C>0$ is an arbitrary constant, and $\alpha, \beta$ and $\gamma$ have explicit values, namely
\[
\alpha=\frac{n}{n(k-1)+2k},\;\;\;\; \beta=\frac{1}{n(k-1)+2k},\;\;\;\; \gamma=\frac{k-1}{2k}\left(\frac{\beta}{c_{n,k}}\right)^\frac{1}{k},\;\;\;\; c_{n,k}=\frac{1}{n}\binom{n}{k}.
\] 
Note that this family is well defined for the full range of $k$-Hessian operators with $k\neq 1$. In the language of \cite{Bidaut-Veron09}, $U_C$ in \eqref{eq:k-Baren} has an expanding support. 

Finally, we can interpret the initial condition $U_C(0,x)$ as $M\delta_0(x)$ in the sense that $U_C(t,x)\rightarrow M\delta_0(x)$ as $t\downarrow 0$, where $\delta_0(x)$ is the Dirac delta function concentrated at 0 and $M=\int_{\RR^n} U_C(t,x)\,dx$. In other words,
\[
\lim_{t\downarrow 0}\int_{\RR^n}U_C(t,x)\varphi(x)dx=M\varphi(0)\,\, \mbox{for all } \varphi\in C_{c}^\infty(\RR^n). 
\]
This statement is a particular case of Lemma 5.4 in \cite{Vazquez03} applied to our setting.

\section{A family of blow up solutions}\label{familybu}
We now give some explicit type II self-similar blow up solutions for equation \eqref{eq:k-evol}. By \eqref{eq:Ansatz2}, they have the form
\begin{equation}\label{blowup}
u(t,x)=(T-t)^\alpha v(\xi),\;\;\xi=x(T-t)^\beta,\, t\in [0,T),\,T<\infty,\; x\in \RR^n.
\end{equation}
If there is only one condition on the scaling exponents $\alpha$ and $\beta$, which here is given by \eqref{eq:sse} with $\rho=-1$, that is, $\alpha (k-1)+2k\beta=-1$, then it is not enough to determine $\alpha$ and $\beta$ explicitly. In particular, when $k=1$, we obtain $\beta=-\frac{1}{2}$ leaving $\alpha$ free. Thus, for a radially symmetric function $v$, \eqref{eq:goveq} reduces to the equation
$
-\alpha v+\frac{1}{2}\,r v'=r^{1-n}(r^{n-1}v')'.
$
However, the choice $\alpha=-\frac{n}{2}(=n\beta)$ allows for the integration of the last equation easily, yielding the profile $v(r)=a\,e^{\frac{r^2}{4}}$, where $a>0$ is a  constant. Inserting all the information in \eqref{blowup}, we write a self-similar solution for equation \eqref{eq:k-evol} that blows up in finite time $T$ and has infinite mass
\[
u(t,x)=a(T-t)^{-\frac{n}{2}}e^{\frac{\abs{x}^2}{4(T-t)}}.
\]
This solution has the simplest form. In the next section, we obtain a whole family of blows up solutions for the heat equation. 

Now let $k\neq 1$ and $\alpha=n\beta$. Using the results obtained in the previous section, we can write in a closed form the profiles and then derive explicit self-similar solutions which blow up in finite time. A surprising fact is that when $k$ is an even integer we have a profile with compact support and when  $k$ is an odd integer we have a profile without compact support. Since the profiles are radial, necessarily the support is a ball or the whole space. Summing up, the following are self-similar solutions of type II for equation \eqref{eq:k-evol} that blow up in finite time $T$:
\begin{eqnarray*}
u(t,x)&=&(T-t)^{\alpha}\left(a^{\frac{k}{k-1}}-\left(\frac{k-1}{2k}\right)\left(\frac{\abs{\beta}}{c_{n,k}}\right)^\frac{1}{k}\frac{\abs{x}^2}{(T-t)^{2\abs{\beta}}}\right)_{+}^{\frac{k}{k-1}}\;\; (k\; \mbox{even}),\;\;\\
u(t,x)&=&(T-t)^{\alpha}\left(a^{\frac{k}{k-1}}+\left(\frac{k-1}{2k}\right)\left(\frac{\abs{\beta}}{c_{n,k}}\right)^\frac{1}{k}\frac{\abs{x}^2}{(T-t)^{2\abs{\beta}}}\right)^{\frac{k}{k-1}}\;\; (k\; \mbox{odd}),
\end{eqnarray*}
where  $a>0$ is an arbitrary constant, $\alpha=-\frac{n}{n(k-1)+2k}$ and $\beta=-\frac{1}{n(k-1)+2k}$.
 
\section{A family of blow up solutions for the heat equation}\label{buHE}
In this section we find, for the heat equation, a whole family of self-similar solutions of type II that blows up in finite time. Recall that, from the previous section, the choice $k=1$ and $\rho=-1$ in \eqref{eq:sse} fixes the value of the exponent $\beta$ to $\beta=-\frac{1}{2}$ leaving $\alpha$ free. We are thus left with a continuum of admissible scaling exponents $\alpha<0$, as is typical for self-similarity of the second kind. A discretely infinite sequence of exponents $\alpha_m$ is, however, selected by requiring that the family be simpler to compute.

The ODE equation for the profile $v$ is $-\alpha v+\frac{1}{2}\,r v'=r^{1-n}(r^{n-1}v')'$. In order to obtain more precise information about the solutions, we rewrite this equation as 
\begin{equation}\label{eq:soe}
r^2v''+r\left(n-1-\frac{r^2}{2}\right)v'+\alpha r^2 v=0.
\end{equation}
Since our attention is focused on blow up solutions, we set $\alpha=-\gamma<0$, with the restriction $\gamma\geq\frac{n}{2}$ due to Lemma \ref{lem:negativeexpo}. To find explicit solutions, we introduce the new variables 
\[
z=\frac{r^2}{4},\; w(z)=v(r),
\]
with which equation \eqref{eq:soe} takes the form 
\[
z\frac{d^2w}{dz^2}+\left(\frac{n}{2}-z\right)\frac{dw}{dz}-\gamma w=0.
\]
The above is a particular case of the so-called {\it confluent hypergeometric equation}
\begin{equation}\label{eq:che}
z\frac{d^2w}{dz^2}+\left(b-z\right)\frac{dw}{dz}-a w=0,
\end{equation}
with $a$ and $b$ constants. In general, the parameters $a, b$ and the variables $z, w$ may take complex values. This is also known as {\it Kummer’s equation}, which is one of the most important differential equations in physics, chemistry, and engineering. Equation \eqref{eq:che} has a regular singularity at $z=0$ with index 0. Thus, we have a power series representing $w$. This power series is called the {\it confluent hypergeometric function of the first kind}, called {\it Kummer's function}, which is defined by 
\begin{equation}\label{eq:Kummersolution}
M(a,b;z)=\sum_{s=0}^{\infty}\frac{(a)_s}{(b)_s} \frac{z^s}{s!},
\end{equation}
where the {\it Pochhammer symbol} $(a)_s$ is defined by
\[
(a)_0=1,\,(a)_1=a,\,\mbox{and }\, (a)_s=a(a+1)\cdot\cdot\cdot(a+s-1),\, \mbox{for}\, s\in\mathbb{N}.
\]
Note that $M(a,b;z)$ is an entire function of $z$ (provided it exists). Among the many properties of this function, we highlight that $M(a,b;0)=1$ (if $b$ is not a non-positive integer) and $M(a,a;z)=e^z$. 

When $a=\gamma (=-\alpha),\, b=\frac{n}{2}$ and $z=\frac{r^2}{4}$ in \eqref{eq:Kummersolution}, an explicit solution of equation \eqref{eq:soe} is
\begin{equation}\label{eq:profilesolution}
v(r)=M\left(-\alpha,\frac{n}{2};\frac{r^2}{4}\right)=\sum_{s=0}^{\infty}\frac{(-\alpha)_s}{(\frac{n}{2})_s} \frac{\left(\frac{r^2}{4}\right)^s}{s!}.
\end{equation}
By the asymptotic behavior of the Kummer function as $\abs{z}\rightarrow\infty$ (if $\mbox{Re}(z)>0$), these solutions have very large growth as $r\rightarrow\infty$. For other properties of this special function, see \cite{Slater60}.

Note that, when $\alpha=-\frac{n}{2}$, we recover the profile found in the previous section, namely $v(r)=e^\frac{r^2}{4}$. In particular, when $\alpha_1=-\frac{n}{2}-1$ and $\alpha_2=-\frac{n}{2}-2$ in \eqref{eq:profilesolution}, we obtain two explicit solutions
\[
v_{\alpha_1}(r)=\left(1+\frac{r^2}{2n}\right)e^\frac{r^2}{4},\; v_{\alpha_2}(r)=\left(1+\frac{r^2}{n}+\frac{r^4}{4n(n+2)}\right)e^\frac{r^2}{4}.
\]
Therefore, from \eqref{eq:profilesolution}, we obtain a family of self-similar solutions of type II for the heat equation that blows up in finite time, namely
\begin{equation}\label{eq:familyalpha}
u_{\alpha}(t,x)=a(T-t)^\alpha\sum_{s=0}^{\infty}\frac{(-\alpha)_s}{(\frac{n}{2})_s} \frac{\left(\frac{\abs{x}^2}{4(T-t)}\right)^s}{s!},
\end{equation}
where  $a>0$ is an arbitrary constant and $\alpha\in (-\infty,-\frac{n}{2}]$. 

Collecting all the information, we give two explicit self-similar solutions of type II for the heat equation that blow-up in finite time $T$:
\begin{eqnarray*}
u_{\alpha_1}(t,x)&=&a(T-t)^{-\frac{n+2}{2}}\left(1+\frac{\abs{x}^2}{2n(T-t)}\right)e^\frac{\abs{x}^2}{4(T-t)},\\
u_{\alpha_2}(t,x)&=&a(T-t)^{-\frac{n+4}{2}}\left(1+\frac{\abs{x}^2}{n(T-t)}+\frac{\abs{x}^4}{4n(n+2)(T-t)^2}\right)e^\frac{\abs{x}^2}{4(T-t)},
\end{eqnarray*}
where  $a>0$ is an arbitrary constant. As far as we know, the family in \eqref{eq:familyalpha} is new. 

Finally, if we put in \eqref{eq:familyalpha} $\alpha_m=-\frac{n}{2}-m$, with $m=0,1,2,...$, we obtain an infinite family of explicit self-similar solutions of type II for the heat equation that blow up in finite time $T$, namely
\begin{equation*}
u_{\alpha_m}(t,x)=a(T-t)^{-\frac{n+2m}{2}}\sum_{s=0}^{\infty}\frac{(\frac{n}{2}+m)_s}{(\frac{n}{2})_s} \frac{\left(\frac{\abs{x}^2}{4(T-t)}\right)^s}{s!},
\end{equation*}
where  $a>0$ is an arbitrary constant. Note that when $m=0$, we recover the solution obtained in the previous section. 

\subsection*{Acknowledgements}
The author has been supported by ANID Fondecyt Grant Number 1221928, Chile.

\section*{Appendix: Existence of a local solution}
In this appendix we prove the existence of a local solution $v$ of equation \eqref{eq:goveq} which satisfy $v(0)=a>0$. The procedure via fixed point arguments is standard, but for the sake of completeness we will give the corresponding proof. For $\alpha\neq 0$, we need to consider two cases, according to the sign of $\alpha$. We only consider $\alpha, \beta>0$, the other cases being similar.
Note that, for $\alpha>0$, the equation \eqref{eq:goveq} is equivalent to the integral equation
\begin{equation}\label{eq:vfixedpoint}
v(r)=a-\int_{0}^{r}G(F(v)(s))ds,
\end{equation}
where 
\begin{equation}\label{eq:defG}
G(s)=\left(\frac{\alpha s}{c_{n,k}}\right)^\frac{1}{k},\; s\geq 0
\end{equation}
and 
\begin{equation}\label{eq:defF}
F(v)(s)=s^k\left(\frac{\beta}{\alpha}v(s)+\left(1-\frac{n\beta}{\alpha}\right)s^{-n}\int_{0}^{s}\tau^{n-1}v(\tau)d\tau\right).
\end{equation}
Now we consider, for $a>\delta>0,\; B_\delta(a):=\{\varphi\in C([0,\rho]):\sup\{\abs{\varphi(s)-a}:s\in [0,\rho]\}<\delta\}$. Here $\rho$ and $\delta$ are to be chosen later.

Now we define
\[
\mathcal{J}(\varphi)(r)=a-\int_{0}^{r}G(F(\varphi)(s))ds.
\]
We find a solution of the equation $\varphi=\mathcal{J}(\varphi)$ in $B_\delta(a)$ where $\rho$ and $\delta$ are small positive numbers which will be chosen sufficiently close to zero.
Obviously $\mathcal{J}(\varphi)\in C([0,\rho])$, and from the definition of $B_\delta(a),\; \varphi(r)\in [a-\delta,a+\delta]$ for all $r\in [0,\rho]$. Moreover, simple calculations show that, for small $\delta$, $F(\varphi)$ is positive on $[0,\rho]$ for all $\varphi\in B_\delta(a)$. More precisely, we have
\begin{equation}\label{eq:Flowerbound}
F(\varphi)(s)\geq As^k,\; \mbox{ for all}\; s\in [0,\rho],
\end{equation}
where
\[
A=\begin{cases}
\frac{a^2}{2n}\;\; \mbox{if}\,\; 0<a<1,\\
\frac{1}{2n}\;\; \mbox{if}\,\; a=1,\\
\frac{a}{2n}\;\; \mbox{if}\,\; a>1.
\end{cases}
\]
To verify \eqref{eq:Flowerbound}, take $\delta$ small enough so that
\[
0<\delta<
\begin{cases}
\frac{a}{2}\;\; \mbox{if}\,\; \frac{n\beta}{\alpha}<1,\\
\frac{a}{2}\frac{1}{2\frac{n\beta}{\alpha}-1}\;\; \mbox{if}\,\; \frac{n\beta}{\alpha}\geq 1.
\end{cases}
\]
Taking into account that the function $r\rightarrow \frac{G(r)}{r}$ is decreasing on $(0,\infty)$, we have
\begin{eqnarray*}
\abs{\mathcal{J}(\varphi)(r)-a}&\leq&\int_{0}^{r}\frac{G(F(\varphi)(s))}{F(\varphi)(s)}\abs{F(\varphi)(s)}ds\\
&\leq&\int_{0}^{r}\frac{G(As^k)}{As^k}\abs{F(\varphi)(s)}ds
\end{eqnarray*}
for $r\in [0,\rho]$. 

On the other hand,
\[
\abs{F(\varphi)(s)}\leq Cs^k,\;\; \mbox{where}\;\; C=\left(\frac{\beta}{\alpha}+\abs{1-\frac{n\beta}{\alpha}}\right)(a+\delta)>0.
\]
We thus get
\[
\abs{\mathcal{J}(\varphi)(r)-a}\leq\frac{C}{2A}\left(\frac{A\alpha}{c_{n,k}}\right)^{\frac{1}{k}}r^2
\]
for all $r\in [0,\rho]$. Choose $\rho$ small enough so that
\[
\abs{\mathcal{J}(\varphi)(r)-a}\leq\delta,\;\; \varphi\in B_\delta(a).
\]
Hence $\mathcal{J}(\varphi)\in B_\delta(a)$.

Next we show that $\mathcal{J}$ is a contraction in some interval $[0,\tilde{r}]$, where $\tilde{r}=\tilde{r}(a)$. Recall that, if $\tilde{r}$ is small enough, the ball $B_\delta(a)$ is invariant under $\mathcal{J}$, i.e., $\mathcal{J}(B_\delta(a))\subset B_\delta(a)$. For such $\tilde{r}$ and any pair $\varphi,\,\psi\in B_\delta(a)$, we have
\begin{equation}\label{eq:difer}
\abs{\mathcal{J}(\varphi)(r)-\mathcal{J}(\psi)(r)}\leq\int_0^r\abs{G(F(\varphi)(s))-G(F(\psi)(s))}ds
\end{equation}
where $F(\varphi)$ is given by \eqref{eq:defF}. Now, define
\[
H(s)=\min\{F(\varphi)(s),F(\psi)(s)\}.
\]
As a consequence of estimate \eqref{eq:Flowerbound}, we have
\[
H(s)\geq As^k\;\; \mbox{for}\;\; 0\leq s\leq r\leq\tilde{r},
\]
whence
\begin{equation}\label{eq:diferestimate}
\begin{split}
\abs{G(F(\varphi)(s))-G(F(\psi)(s))}&\leq\abs{\frac{G(H(s))}{H(s)}(F(\varphi)(s)-F(\psi)(s))}\\
&\leq\frac{G(As^k)}{As^k}\abs{F(\varphi)(s)-F(\psi)(s)}.
\end{split}
\end{equation}
Moreover,
\begin{equation}\label{eq:Linftyestimate}
\abs{F(\varphi)(s)-F(\psi)(s)}\leq\tilde{C}\norm{\varphi-\psi}_{\infty}s^k ,
\end{equation}
where 
\[
\tilde{C}=\frac{\beta}{\alpha}+\abs{\frac{1}{n}-\frac{\beta}{\alpha}}.
\]
Combining \eqref{eq:difer}, \eqref{eq:diferestimate} and \eqref{eq:Linftyestimate}, we obtain
\[
\abs{\mathcal{J}(\varphi)(r)-\mathcal{J}(\psi)(r)}\leq\frac{\tilde{C}}{2A}\left(\frac{A\alpha}{c_{n,k}}\right)^{\frac{1}{k}}r^2\norm{\varphi-\psi}_{\infty}
\]
for any $r\in [0,\tilde{r}]$. Choosing $\tilde{r}$ small enough, $\mathcal{J}$ is a contraction. The Banach fixed point theorem now implies the existence of a unique fixed point of $\mathcal{J}$ in $B_\delta(a)$, which is a solution of \eqref{eq:vfixedpoint} and, consequently, of \eqref{eq:goveq} and \eqref{eq:inicond}. As usual, this solution can be extended to a maximal interval $[0,R_0),\; 0<R_0\leq\infty$.
\bibliographystyle{plain}
\bibliographystyle{apalike}
\bibliography{kHessianbib}
\end{document}